\documentclass[12pt]{article}
\usepackage{mathrsfs}
\usepackage{tipa}
\usepackage{amssymb}
\usepackage{amsfonts}
\usepackage{cite}
\usepackage{graphicx}
\usepackage{latexsym,amsmath,amssymb,amsfonts,amsthm}

\newtheorem{theorem}{Theorem}[section]
\newtheorem{lemma}[theorem]{Lemma}

\newtheorem{proposition}[theorem]{Proposition}

\begin{document}
\setcounter{page}{1}
\title{Faces of invariant convex sets in representations of nontrivial copolarity}
\author{Yi Shi$^{\ 1,2}$}
\date{}
\protect
\maketitle ~~~\\[-5mm]

\protect\footnotetext{\!\!\!\!\!\!\!\!\!\!\!\!\! {\bf 2020 Mathematics Subject Classification:}
57S15, 52A05.
\\
{\bf ~~Key Words:} invariant convex sets, face, fat section.}
\maketitle ~~~\\[-12mm]
{\footnotesize $^1$School of Mathematics and Statistics, Guizhou University of Finance and Economics, Guiyang 550025, China.\\[1mm]
$^2$School of Big Data Statistics, Guizhou University of Finance and Economics, Guiyang 550025, China.\\[1mm]
e-mail: shiyi\underline{\hbox to 0.2cm{}}math@163.com}\\[1mm]

{\bf Abstract:} \noindent Let $(V, G)$ be an orthogonal representation of a compact Lie group $G$ with nontrivial copolarity, and $\Sigma$ a fat section of $(V, G)$. If $E$ is a $G$-invariant compact convex set in $V$, then $P=E\cap\Sigma$ is a convex set in $\Sigma$. We prove that up to conjugacy the face structure of $E$ is completely determined by that of $P$ and that a face of $E$ is exposed if and only if the corresponding face of $P$ is exposed. Our result generalizes the result proved by Leonardo Biliotti, Alessandro Ghigi and Peter Heinzner in the case where $(V, G)$ is a polar representation.
\markright{\sl\hfill \hfill}\\

\section{Introduction}
\renewcommand{\thesection}{\arabic{section}}
\renewcommand{\theequation}{\thesection.\arabic{equation}}
\setcounter{equation}{0}

An orthogonal representation $(V, G)$ of a compact Lie group $G$ on $V$ is called \emph{polar} with \emph{section} $\Sigma$ if $\Sigma$ is a linear subspace of $V$ that intersects all $G$-orbits orthogonally (see \cite{Da}). In this situation, there is a finite group $W\subset O(\Sigma)$, called the Weyl group, such that the orbit spaces $V/G$ and $\Sigma/W$ are isometric. Let $\sigma: V\rightarrow\Sigma$ be the orthogonal projection onto $\Sigma$. By Kostant's Convexity Theorem (see e.g. \cite{BKM2,Ko,KS}), for any polar representation $(V, G)$, the bijection
\begin{eqnarray}
 \{G\text{-invariant subsets of } V\}&\longleftrightarrow& \{W\text{-invariant subsets of } \Sigma\} \nonumber \\
 E &\longrightarrow& P=E\cap\Sigma=\sigma(E), \\
  E= G \cdot P &\longleftarrow& P. \nonumber
\end{eqnarray}
maps convex subsets to convex subsets. Very recently, Bettiol, Kummer and Mendes \cite{BKM2} proved that the bijection $(1.1)$ also preserves some special classes of invariant convex subsets, such as convex semi-algebraic sets, spectrahedral shadows and rigidly convex sets. They also conjectured that $(1.1)$ preserves the class of spectrahedra. For further details about spectrahedra, spectrahedral shadows and rigidly convex sets, we refer the readers to \cite{BKM1,BKM2,BPT,KS,Ku1,Ku2,NP,Sc,SS}. For applications of $(1.1)$ in convex optimization problems, see Section 6 of \cite{BKM2} for details.

In \cite{GOT} Gorodski, Olmos and Tojeiro introduced a new invariant for isometric actions of compact Lie groups, the \emph{copolarity}. Roughly speaking, it measures how far from being polar the action is. A \emph{fat section} (term from \cite{Ma1,Ma2}, originally known as \emph{$k$-section} in \cite{GOT}) $\Sigma$ is defined for an orthogonal representation $(V, G)$ as follows:

(A) $\Sigma$ is a linear subspace of $V$ that intersects all $G$-orbits;

(B) for all $G$-regular $p\in \Sigma$ we have $\nu_{p}(G\cdot p)\subseteq T_{p}\Sigma$ with codimension $k$; and

(C) for all $G$-regular $p\in \Sigma$ and $g\in G$ such that $g\cdot p\in \Sigma$ we have $g\cdot\Sigma=\Sigma$.

\noindent The sections of a polar representation are $0$-sections in the above sense. we say that $(V, G)$ has \emph{nontrivial copolarity} if $V$ is not the unique fat section. we can associate to each fat section $\Sigma$ a Lie group $W := N_{G}(\Sigma)/Z_{G}(\Sigma)$ (the normalizer of $\Sigma$ in $G$ modulo its pointwise stabilizer), which acts on $\Sigma$ in a natural way. This $W$ is called the \emph{fat Weyl group} of $\Sigma$, although in general it can be almost any Lie group and hence is not a Weyl group in the classical sense. As in the case of polar representation, the orbit spaces $V/G$ and $\Sigma/W$ are isometric (see \cite{GOT,Ma1}). For details about copolarity and more examples of fat sections, we refer the readers to \cite{GG,GL,GOT,Ma1,Ma2,Me,PP}.

The recent work of Mendes \cite{Me} shows that the bijection $(1.1)$ also holds for orthogonal representation $(V, G)$ of nontrivial copolarity, in which case $\Sigma$ is a fat section and $W$ is the fat Weyl group of $\Sigma$ (see Proposition \ref{Me} for details). In fact, a bijection similar to $(1.1)$ has been shown to hold in a more general context in \cite{Me}. Namely, given two Euclidean vector spaces $V, V'$, two submetries $V\rightarrow X$ and $V'\rightarrow X'$, and an isometry $X\rightarrow X'$, the latter induces a bijection between saturated closed convex subsets of $V$ and $V'$, respectively. In the case where $(V, G)$ is an orthogonal representation of nontrivial copolarity, one get the bijection $(1.1)$ by taking $V\rightarrow X$ and $V'\rightarrow X'$ to be the natural quotient maps $V\rightarrow V/G$ and $\Sigma\rightarrow \Sigma/W$, and the isometry $V/G\rightarrow \Sigma/W$ to be the natural bijection induced by the inclusion $\Sigma\hookrightarrow V$. Mendes also introduced a more general class of fat sections, which are defined via submetry. For more about submetry, see \cite{BG,CG,KL,KR,LW,MR}.

Let $(V, G)$ be an orthogonal representation of nontrivial copolarity, and $(\Sigma, W)$ a fat section $\Sigma$ with fat Weyl group $W$, $\sigma: V\rightarrow\Sigma$ the orthogonal projection. If $E\subset V$ is a $G$-invariant compact convex set in $V$, then by Theorem C in \cite{Me} $P:=\sigma(E)=E\cap\Sigma$ is a $W$-invariant convex set in $\Sigma$. Denote by $\mathscr{F}(E)$ the faces of $E$ and by $\mathscr{F}(P)$ the faces of $P$. $G$ acts on $\mathscr{F}(E)$ and the Weyl group $W$ acts on $\mathscr{F}(P)$. If $Q$ is a face of $P$, then $F_{Q}:=\sigma^{-1}(Q)\cap E$ is a face of $E$ (see Proposition \ref{FQ}). Furthermore, we can prove the following theorem, which is a generalization of Theorem 0.1 in \cite{BGH3}:
\begin{theorem}\label{dl}
The map $\mathscr{F}(P)\rightarrow \mathscr{F}(E)$, $Q\mapsto F_{Q}$ induces a bijection between $\mathscr{F}(P)/W$ and $\mathscr{F}(E)/G$.
\end{theorem}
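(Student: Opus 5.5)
We need three things: the map $Q\mapsto F_Q$ is equivariant, so it descends to a map $\mathscr{F}(P)/W\to\mathscr{F}(E)/G$; this map is surjective; and it is injective.

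\textbf{Equivariance.} Let $w\in W$ be represented by $n\in N_G(\Sigma)$. Since $N_G(\Sigma)$ preserves $\Sigma$ and $\Sigma^\perp$, the projection satisfies $\sigma(n\cdot v)=n\cdot\sigma(v)$. Because $E$ is $G$-invariant, this gives
\[
F_{w\cdot Q}=\sigma^{-1}(n\cdot Q)\cap E=n\cdot F_Q .
\]
Hence the map descends to orbit spaces. Proposition \ref{FQ} guarantees that each $F_Q$ is indeed a face of $E$.

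\textbf{Surjectivity.} Let $F$ be a face of $E$. I would take a point $x$ in the relative interior of $F$. Since $\Sigma$ meets every $G$-orbit, we may replace $F$ by a $G$-translate and assume $x\in\Sigma$, so $x\in P$. Let $Q$ be the unique face of $P$ containing $x$ in its relative interior. The goal is to show $F=F_Q$.

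First, $F\cap\Sigma$ is a face of $P=E\cap\Sigma$ and contains $x$ in its relative interior, so $F\cap\Sigma=Q$. Next, $F_Q$ is a face of $E$ containing $x$. Since $F$ is the smallest face of $E$ containing $x$, we get $F\subseteq F_Q$. For the reverse inclusion, I would show that $F_Q\cap\Sigma=Q$, that $x$ lies in the relative interior of $F_Q$, and conclude $F_Q=F$ by uniqueness of the face with $x$ in its relative interior.

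The inclusion $F_Q\cap\Sigma\subseteq\sigma^{-1}(Q)\cap\Sigma=Q$ is clear. For the relative interior claim, take $y\in F_Q$. Then $\sigma(y)\in Q$, and since $x\in\operatorname{relint}Q$, some point $z$ with $x\in(z,\sigma(y))$ lies in $Q$. The difficulty is lifting the segment from $\Sigma$ to $V$.

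I would instead use the following route. Both $F$ and $F_Q$ are faces of $E$ containing $x$. Their intersection $F\cap F_Q$ is then a face of $F_Q$, since $F\subseteq F_Q$. Suppose $F\neq F_Q$. Then $F$ is a proper face of the convex set $F_Q$, so $F$ is disjoint from $\operatorname{relint}F_Q$. To get a contradiction, I would show $\sigma(F)=Q$. Indeed, $\sigma(F)\subseteq\sigma(E)=P$, and a convexity argument in the spirit of Kostant's theorem / Theorem C of \cite{Me} applied to $F$ gives $\sigma(F)\subseteq F\cap\Sigma=Q$; this needs care.

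\textbf{Injectivity.} Suppose $F_{Q'}=g\cdot F_Q$. Using equivariance, $F_{Q'}\cap\Sigma=Q'$ and $g F_Q\cap\Sigma=g\cdot(F_Q\cap\Sigma)\cap\Sigma$. Choose $x\in\operatorname{relint}Q$; then $g\cdot x$ lies in $g\cdot F_Q=F_{Q'}$, so $\sigma(gx)\in Q'$. We need $w\in W$ with $wQ=Q'$.

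The plan is to pick $x$ regular, or to reduce to that case. If $gx\in\Sigma$ with $x$ $G$-regular, condition (C) gives $g\in N_G(\Sigma)$. For general faces, I would use the fact that $\sigma(F)$ determines $F\cap\Sigma$ as a face, together with the isometry $V/G\cong\Sigma/W$. Two points of $\Sigma$ in the same $G$-orbit are in the same $W$-orbit. Hence $gx$ and some $w x$ satisfy $\sigma$-relations that force $wQ=Q'$ by comparing relative interiors.

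\textbf{Main obstacle.} The key lemma for both surjectivity and injectivity is: for any $G$-invariant compact convex $E$ and any face $F$ of $E$ meeting $\Sigma$ suitably, $\sigma(F)=F\cap\Sigma$. Its analogue in the polar case is \cite{BGH3}'s use of Kostant convexity. I would prove it by reducing to the slice representation at a point $x\in\operatorname{relint}(F\cap\Sigma)$: faces containing $x$ are $G_x$-invariant, and slice representations of representations with fat sections again admit fat sections (\cite{GOT,Me}). This would let me apply Theorem C of \cite{Me} to $F$ and the $G_x$-action.
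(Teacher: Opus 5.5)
There is a genuine gap, and it sits exactly where the content of the theorem lies. Your surjectivity reduction is sound: after moving a point $x\in\operatorname{relint}F$ into $\Sigma$ you correctly get $F\cap\Sigma=Q$ and $F\subseteq F_Q$. What remains is the reverse inclusion $F_Q\subseteq F$. Equivalently, $x\in\operatorname{relint}F_Q$, or $\operatorname{relint}F_Q\cap\Sigma\neq\emptyset$. You name this difficulty but never resolve it, and your detour does not resolve it either. Proving $\sigma(F)=Q$ gives no contradiction with $F\subsetneq F_Q$: also $\sigma(F_Q)=Q$, and nothing you state excludes a proper face of $F_Q$ projecting onto all of $Q$. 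Your ``key lemma'' $\sigma(F)=F\cap\Sigma$ only re-derives $F\subseteq F_Q$. The slice sketch also fails as written, for three reasons. Faces through $x$ need not be $G_x$-invariant; only the one with $x$ in its relative interior is guaranteed to be. They are in general not contained in $x+\nu_x(G\cdot x)$ (take $F=E$). And $\Sigma$ is in general not a fat section of $(V,G_x)$. So Theorem C of \cite{Me} has nothing to apply to.

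The paper instead takes slices at \emph{exposing vectors}. In a maximal chain of faces each step is exposed, and the exposing vector $u$ can be conjugated into the current fat section. Proposition~\ref{F1} then shows that $F_u(E)$ lies in the linear space $\nu_u(G\cdot u)$, where $\Sigma\cap\nu_u(G\cdot u)$ is a fat section for $G_u$ and $\sigma$ restricts to the projection. Iterating gives $\tilde F=g\cdot F_Q$ and $F_Q=K\cdot Q$ for a subgroup $K$.

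Your own framework can also be completed using Proposition~\ref{Me}(b). Take $z\in\operatorname{relint}F_Q$ and write $z=hp$ with $p\in P$. Since $\sigma(z)\in Q\cap\operatorname{conv}(W\cdot p)$ and $Q$ is a face of $P$, some $w\cdot p$ lies in $Q$, so after adjusting $h$ you may assume $p\in Q$. Let $Q_p\subseteq Q$ be the face of $P$ with $p\in\operatorname{relint}Q_p$. Since $p\in\operatorname{relint}(h^{-1}F_Q)$, you get $h^{-1}F_Q\subseteq F_{Q_p}\subseteq F_Q$, and equality of dimensions forces $h^{-1}F_Q=F_Q$. Hence $p\in\operatorname{relint}F_Q\cap\Sigma$, so $\operatorname{relint}F_Q\cap\Sigma=\operatorname{relint}Q\ni x$, and $F=F_Q$.

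Injectivity has a parallel hole. From $F_{Q'}=g\cdot F_Q$ and $x\in\operatorname{relint}Q$ you only know $g\cdot x\in F_{Q'}$, not $g\cdot x\in\Sigma$. So regularity and condition (C) never enter, and in any case $Q$ may consist entirely of singular points. The missing step is to exhibit a point of $W\cdot x$ inside $Q'$. The paper gets it from $F_{Q'}=K'\cdot Q'$: $g\cdot x=k'\cdot p'$ with $p'\in Q'$, so $p'\in(G\cdot x)\cap\Sigma=W\cdot x$. Alternatively, $\sigma(g\cdot x)\in Q'\cap\operatorname{conv}(W\cdot x)$ by Proposition~\ref{Me}(b). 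Since $Q'$ is a face of $P\supseteq W\cdot x$, every $w_i\cdot x$ carrying positive weight in that convex combination lies in $Q'$. Then $w\cdot x\in Q'\cap\operatorname{relint}(w\cdot Q)$ gives $w\cdot Q\subseteq Q'$. The symmetric inclusion plus equal dimensions then gives $w\cdot Q=Q'$.
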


Theorem \ref{dl} was originally proved by Biliotti, Ghigi and Heinzner \cite{BGH3} in the case where $(V, G)$ is a polar representation.
In the case where $E$ is a polar orbitope, Theorem \ref{dl} was first established in \cite{BGH2}, and later reproved by Kobert and Scheiderer in \cite{KS} using a new approach. However, as pointed out in \cite{KS}, \cite{BGH2} proves a more precise result, implying in particular that the faces $F_{Q}$ of $P$ are themselves orbitopes under suitable groups. In fact, when $(V, G)$ is a polar representation, the authors proved that $F_{Q}=G^{Q^{\perp}}\cdot Q$ in both \cite{BGH2} and \cite{BGH3}, where $Q^{\perp}$ denote the orthogonal complement in $\Sigma$ of the affine hull of $Q$, and $G^{Q^{\perp}}:=\{g\in G\mid g\cdot x=x, \text{for all }x\in Q^{\perp}\}$. Our method of proving Theorem \ref{dl} is inspired by \cite{KS}. However, similar to \cite{KS}, we have not proved that $F_{Q}=G^{Q^{\perp}}\cdot Q$. Furthermore, we do not know whether $F_{Q}=G^{Q^{\perp}}\cdot Q$ still holds in our case.

An application of Theorem \ref{dl} is the following result, which is a generalization of Theorem 0.2 in \cite{BGH3}:
\begin{theorem}\label{d2}
The faces of E are exposed if and only if the faces of P are exposed.
\end{theorem}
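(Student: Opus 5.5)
Since every face of $E$ is, by Theorem \ref{dl}, of the form $g\cdot F_Q$ for a face $Q$ of $P$, and $g\cdot F_Q$ is exposed if and only if $F_Q$ is, it suffices to show: $F_Q$ is an exposed face of $E$ if and only if $Q$ is an exposed face of $P$.

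\emph{If $Q$ is exposed, then $F_Q$ is exposed.} Write $Q=\{x\in P:\langle x,u\rangle=\max_P\langle\cdot,u\rangle\}$ with $u\in\Sigma$. Since $\sigma$ is the orthogonal projection onto $\Sigma$ and $u\in\Sigma$, we have $\langle y,u\rangle=\langle\sigma(y),u\rangle$ for all $y\in V$. Moreover $\sigma(E)=P$ by Theorem C in \cite{Me}. Hence
\[
\max_E\langle\cdot,u\rangle=\max_P\langle\cdot,u\rangle,
\]
and the maximizers in $E$ are exactly the points $y\in E$ with $\sigma(y)\in Q$, that is, $\sigma^{-1}(Q)\cap E=F_Q$. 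So $F_Q$ is exposed by the same vector $u$.

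\emph{If $F_Q$ is exposed, then $Q$ is exposed.} Let $F_Q$ be exposed by some $v\in V$. I first check that $F_Q\cap\Sigma=Q$. Indeed, $Q\subset P\subset E$ and $\sigma$ is the identity on $\Sigma$, so $Q\subset F_Q\cap\Sigma$. Conversely, if $x\in F_Q\cap\Sigma$, then $x=\sigma(x)\in Q$. Now restrict the linear functional to $\Sigma$: for $x\in\Sigma$, $\langle x,v\rangle=\langle x,\sigma(v)\rangle$.

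\emph{Main obstacle.} The maximum of $\langle\cdot,v\rangle$ over $E$ is attained on $F_Q$, which meets $\Sigma$ (when $Q\neq\emptyset$). Therefore $\max_P\langle\cdot,\sigma(v)\rangle=\max_E\langle\cdot,v\rangle$. The maximizers in $P$ are then $\{\text{maximizers in } E\}\cap\Sigma=F_Q\cap\Sigma=Q$. So $Q$ is exposed by $\sigma(v)$. This step uses that $P=E\cap\Sigma$ (again Theorem C in \cite{Me}). The empty face and the case $Q=P$ are trivial and can be handled separately. The argument never requires the delicate description $F_Q=G^{Q^\perp}\cdot Q$.
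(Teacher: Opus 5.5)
Your proof is correct. The direction ``faces of $P$ exposed $\Rightarrow$ faces of $E$ exposed'' is the paper's argument: surjectivity in Theorem~\ref{dl} plus Proposition~\ref{FQ}(a). Your converse, however, takes a different and more elementary route.

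The paper first conjugates the exposing vector $\tilde u$ into $\Sigma$, using that $\Sigma$ meets every $G$-orbit. Proposition~\ref{FQ}(b) then gives $g\cdot F_Q=F_u(E)=F_{F_u(P)}$. Next it invokes the injectivity half of Theorem~\ref{dl}, i.e.\ Theorem~\ref{KS}(b), which rests on $F_Q=K\cdot Q$ and on $(G\cdot y)\cap\Sigma=W\cdot y$. This produces $w\in W$ with $F_u(P)=w\cdot Q$, so the paper shows $w\cdot Q$ is exposed and only then transfers this back to $Q$.

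You instead project the exposing vector orthogonally onto $\Sigma$. Since $\langle x,v\rangle=\langle x,\sigma(v)\rangle$ for $x\in\Sigma$ and $F_Q\cap\Sigma=Q$, the face $Q$ is exposed directly by $\sigma(v)$. This is the general fact that an exposed face of a convex set, intersected with a linear subspace, is an exposed face of the intersection. It uses nothing about the action beyond $P=E\cap\Sigma$ and the fact that $F_Q$ is a face. In particular, it shows this direction needs neither Theorem~\ref{dl} nor the Weyl-group bookkeeping. The paper's route only has the cosmetic feature of keeping the exposing vector inside the orbit $G\cdot\tilde u$, which the theorem does not require.

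One point you should state explicitly is that $\sigma(v)\neq 0$ when $Q$ is a proper face. This matters because the paper defines exposed faces via nonzero vectors. It follows from your own computation: if $\sigma(v)=0$, every point of $P$ would be a maximizer, forcing $Q=P$.
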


For orthogonal representation $(V, G)$ with fat section $\Sigma$, if $x\in \Sigma$, then $W\cdot x=(G\cdot x)\cap\Sigma$ by Lemma 5.8 in \cite{GOT}. In Theorem \ref{dl}, if $E :=\textup{conv}(G\cdot x)$ is an orbitope in $V$, then by Theorem C in \cite{Me} $P=E\cap\Sigma=\textup{conv}((G\cdot x)\cap\Sigma)=\textup{conv}(W\cdot x)$ is an orbitope in $\Sigma$. Hence, by Theorem \ref{dl} and \ref{d2} we get the following theorem, which is a generalization of Theorem 1.1 in \cite{BGH2}:
\begin{theorem}\label{d3}
Let $(V, G)$ be an orthogonal representation of nontrivial copolarity, and $(\Sigma, W)$ a fat section $\Sigma$ with fat Weyl group $W$. If $x\in \Sigma$, then up to conjugacy the face structure of $E :=\textup{conv}(G\cdot x)$ is completely determined by that of $P :=\textup{conv}(W\cdot x)$ and that a face of $E$ is exposed if and only if the corresponding face of $P$ is exposed.
\end{theorem}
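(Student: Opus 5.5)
The plan is to obtain Theorem \ref{d3} as a direct specialization of Theorems \ref{dl} and \ref{d2}. The only work is to check that $E:=\textup{conv}(G\cdot x)$ satisfies their hypotheses and that $E\cap\Sigma$ equals $P:=\textup{conv}(W\cdot x)$.

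First, $G\cdot x$ is compact, since it is the continuous image of the compact group $G$. In a finite-dimensional space the convex hull of a compact set is compact (Carathéodory), so $E$ is compact and convex. $E$ is also $G$-invariant: $G$ acts linearly and preserves $G\cdot x$, so it preserves $\textup{conv}(G\cdot x)$. Hence $E$ is a $G$-invariant compact convex set in $V$, and Theorems \ref{dl} and \ref{d2} apply to it with $\Sigma$ and $W$ as in the statement.

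Second, I will identify $E\cap\Sigma$. By Theorem C in \cite{Me}, the bijection $(1.1)$ for representations of nontrivial copolarity is compatible with taking convex hulls of invariant sets. Applied to the $G$-invariant set $G\cdot x$, this gives
\[
E\cap\Sigma=\sigma(E)=\textup{conv}\bigl((G\cdot x)\cap\Sigma\bigr).
\]
By Lemma 5.8 in \cite{GOT}, for $x\in\Sigma$ we have $(G\cdot x)\cap\Sigma=W\cdot x$. Therefore $E\cap\Sigma=\textup{conv}(W\cdot x)=P$.

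The main point to get right is the use of Theorem C: it must say that the bijection sends $\textup{conv}(A)$ to $\textup{conv}(A\cap\Sigma)$ for invariant $A$, and not merely that it preserves convexity. If only convexity preservation is available, I will argue as follows.
\begin{itemize}
\item Inclusion $P\subseteq E\cap\Sigma$: $W\cdot x\subseteq (G\cdot x)\cap\Sigma\subseteq E\cap\Sigma$, and $E\cap\Sigma$ is convex.
\item Inclusion $E\cap\Sigma\subseteq P$: consider the $W$-invariant convex set $P$, which is compact as the convex hull of the compact orbit $W\cdot x$. By the bijection $(1.1)$, the $G$-invariant set $G\cdot P$ is convex and satisfies $(G\cdot P)\cap\Sigma=P$. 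Since $G\cdot P$ contains $G\cdot x$, it contains $E$. Hence $E\cap\Sigma\subseteq (G\cdot P)\cap\Sigma=P$.
\end{itemize}

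With $P=E\cap\Sigma$ established, Theorem \ref{dl} gives that $Q\mapsto F_Q$ induces a bijection $\mathscr{F}(P)/W\to\mathscr{F}(E)/G$. This is the statement that the face structure of $E$ is determined up to conjugacy by that of $P$. Theorem \ref{d2}, in the form proved for corresponding faces (that $F_Q$ is exposed if and only if $Q$ is), gives the exposedness correspondence, which concludes the proof.
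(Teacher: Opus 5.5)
Your proposal is correct and matches the paper's own argument: you identify $E\cap\Sigma=\textup{conv}(W\cdot x)$ via Theorem C of \cite{Me} and Lemma 5.8 of \cite{GOT}, then apply Theorems \ref{dl} and \ref{d2}. Your fallback argument for $E\cap\Sigma\subseteq P$, using the convexity of $G\cdot P$, is also valid and in fact does not need Lemma 5.8. You are also right that the per-face form of Theorem \ref{d2} (that $F_Q$ is exposed if and only if $Q$ is) is exactly what its proof establishes.
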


\section{Preliminaries}
\renewcommand{\thesection}{\arabic{section}}
\renewcommand{\theequation}{\thesection.\arabic{equation}}
\setcounter{equation}{0}
\setcounter{theorem}{0}

We start by recalling some basic definitions and results about convex sets. For more details see e.g. \cite{Sch}. Let $V$ be a real vector space with scalar product $\langle\cdot,\cdot\rangle$, and $E \subset V$ a compact convex subset of $V$. The \emph{relative interior} of $E$, denoted relint$E$, is the interior of E in its affine hull. By definition a \emph{face} of $E$ is a convex subset $F\subset E$ such that $x, y\in E$ and $\textup{relint}[x,y]\cap F\neq\emptyset$ implies $[x,y]\subset F$. A face distinct from $E$ and $\emptyset$ will be called a \emph{proper face}. Every nonzero vector $u$ defines an \emph{exposed face}
$$F_{u}(E):=\{x\in E\mid \langle x,u \rangle=\max_{y\in E}\langle y,u \rangle\}.$$
In general not all faces of a convex set are exposed, see Figure 1 in \cite{BGH3} for an example. Since $E$ is compact the faces are closed and hence compact. If $F_{1}$ and $F_{2}$ are faces of $E$ and they are distinct, then $\textup{relint}F_{1}\cap \textup{relint}F_{2}=\emptyset$. Moreover $E$ is the disjoint union of the relative interiors of its faces (see \cite{Sch}, P.74).

\begin{lemma}\label{chain}(Lemma 10 in \cite{BGH1}) If $E$ is a compact convex set and $F\subset E$ is a face, then there is a chain of faces $F=F_{n}\subsetneq F_{n-1}\subsetneq \cdots \subsetneq F_{0}=E$ which is maximal, in the sense that for any $i$ there is no face of $E$ strictly contained between $F_{i-1}$ and $F_{i}$.
\end{lemma}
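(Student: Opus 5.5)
We argue by a dimension count. The faces of $E$ form a partially ordered set under inclusion, and it suffices to show that a strictly increasing chain of faces of $E$ has length bounded in terms of $\dim E$. Then a chain from $F$ to $E$ of maximal length exists, and such a chain is automatically maximal in the stated sense.

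The key step is: if $F'\subsetneq F''$ are faces of $E$, then $\dim F'<\dim F''$. For the empty face we use the convention $\dim\emptyset=-1$, so we may assume $F'\neq\emptyset$. Suppose instead $\dim F'=\dim F''$. Since $F'\subset F''$, their affine hulls coincide. The relative interior of the nonempty convex set $F'$ is nonempty and open in $\textup{aff}\,F'=\textup{aff}\,F''$. Hence $\textup{relint}F'\subset\textup{relint}F''$, because an open subset of the common affine hull lying inside $F''$ lies in the interior of $F''$ relative to that hull. Thus $\textup{relint}F'\cap\textup{relint}F''\neq\emptyset$. The preliminaries recall that two distinct faces have disjoint relative interiors, so $F'=F''$, a contradiction.

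Now consider all chains $F=F_n\subsetneq F_{n-1}\subsetneq\cdots\subsetneq F_0=E$ of faces of $E$. At least one exists: the chain $F\subsetneq E$, or the one-element chain if $F=E$. By the key step, the dimensions strictly decrease along any such chain, so $n\le \dim E-\dim F\le \dim E+1$. Hence we may choose a chain with $n$ maximal. If some face $F'$ of $E$ satisfied $F_i\subsetneq F'\subsetneq F_{i-1}$, inserting $F'$ would give a longer chain from $F$ to $E$, contradicting the maximality of $n$. So the chosen chain is maximal in the required sense.

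The only point needing care is the strict monotonicity of dimension, specifically the inclusion $\textup{relint}F'\subset\textup{relint}F''$ when the affine hulls agree. This is elementary, and we do not expect any real obstacle; the remainder is a finiteness argument.
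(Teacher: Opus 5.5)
Your proof is correct and complete. The key step is the strict inequality $\dim F'<\dim F''$ for faces $F'\subsetneq F''$, which you get from $\textup{relint}F'\subset\textup{relint}F''$ when the affine hulls agree together with the disjointness of relative interiors of distinct faces. It bounds the length of any chain from $F$ to $E$ by $\dim E-\dim F$, and a longest such chain is then automatically maximal.

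The paper gives no proof of its own here; it simply quotes Lemma 10 of \cite{BGH1}. Your dimension count is the standard argument for this fact. It uses only that $V$ is finite-dimensional, which holds throughout the paper.
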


For an orthogonal representation $(V, G)$ of a compact Lie group $G$, the \emph{orbitope} of a vector $x\in V$, denoted $\textup{conv}(G\cdot x)$, is the convex hull of the orbit $G\cdot x$ in $V$. See \cite{SSS} for further details about orbitopes.

The following Proposition is a special case of Theorem C in \cite{Me}:
\begin{proposition}\label{Me}(\cite{Me}) Let $(V, G)$ be an orthogonal representation, and $(\Sigma, W)$ a fat section $\Sigma$ with fat Weyl group $W$, $\sigma: V\rightarrow\Sigma$ the orthogonal projection onto $\Sigma$. Let $E\subset V$ be a $G$-invariant convex set in $V$, and $P:=\sigma(E)$. Then:

\noindent(a) $P=\sigma(E)=E\cap\Sigma$ and $E=G\cdot P$.

\noindent(b) For $x\in \Sigma$, $\textup{conv}(G\cdot x)\cap\Sigma=\sigma(\textup{conv}(G\cdot x))=\textup{conv}((G\cdot x)\cap\Sigma)=\textup{conv}(W\cdot x)$.

\noindent(c) The map $E\mapsto P=\sigma(E)=E\cap\Sigma$ is a bijection between $G$-invariant convex subsets of $V$ and $W$-invariant convex subsets of $\Sigma$.
\end{proposition}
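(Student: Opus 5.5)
The statement is the special case of Theorem C in \cite{Me}, so one may simply quote it. I would nevertheless check that it follows directly from two facts already recalled in the introduction. The first is that every $G$-orbit meets $\Sigma$ and $(G\cdot x)\cap\Sigma=W\cdot x$ for $x\in\Sigma$ (Lemma 5.8 in \cite{GOT}). The second is that the inclusion $\Sigma\hookrightarrow V$ induces an isometry $\Sigma/W\to V/G$ (\cite{GOT,Ma1}). The plan is to prove (b) first by comparing support functions, and then to deduce (a) and (c) from (b).

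For (b), fix $x\in\Sigma$ and $u\in\Sigma$. For $y\in G\cdot x$ we have $|y|=|x|$, so
$$\langle y,u\rangle=\tfrac12\bigl(|x|^2+|u|^2-|y-u|^2\bigr).$$
Hence maximizing $\langle\cdot,u\rangle$ over $G\cdot x$ is the same as minimizing the distance from $u$ to $G\cdot x$. That minimal distance is $d_{V/G}(G\cdot u,G\cdot x)$. Since $G\cdot u$ is also $G$-invariant and $G$ acts by isometries, this equals the distance between the orbits.

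By the isometry of orbit spaces it equals $d_{\Sigma/W}(W\cdot u,W\cdot x)=\min_{w}|w\cdot x-u|$. Therefore $h_{G\cdot x}(u)=h_{W\cdot x}(u)$ for every $u\in\Sigma$, where $h$ denotes the support function. For $u\in\Sigma$ we also have $\langle\sigma(y),u\rangle=\langle y,u\rangle$, so $h_{\sigma(\mathrm{conv}(G\cdot x))}=h_{\mathrm{conv}(W\cdot x)}$ on $\Sigma$. Both sets are compact convex subsets of $\Sigma$, hence $\sigma(\mathrm{conv}(G\cdot x))=\mathrm{conv}(W\cdot x)$. Combined with the chain
$$\mathrm{conv}(W\cdot x)\subseteq \mathrm{conv}(G\cdot x)\cap\Sigma\subseteq\sigma(\mathrm{conv}(G\cdot x)),$$
this gives all the equalities in (b).

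For (a), let $E$ be $G$-invariant and convex, and let $y\in E$. Choose $x\in (G\cdot y)\cap\Sigma$. Then $\sigma(y)\in\sigma(\mathrm{conv}(G\cdot x))=\mathrm{conv}(W\cdot x)$. Since $W\cdot x\subseteq G\cdot y\subseteq E$ and $E$ is convex, $\mathrm{conv}(W\cdot x)\subseteq E$. Hence $\sigma(E)\subseteq E\cap\Sigma\subseteq\sigma(E)$. The identity $E=G\cdot P$ holds because every orbit in $E$ meets $\Sigma$, and the meeting point lies in $E\cap\Sigma=P$.

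For (c), the forward map $E\mapsto P$ lands in $W$-invariant convex sets, and it is injective because $E=G\cdot P$. For surjectivity, take a $W$-invariant convex $P$ and set $E:=\mathrm{conv}(G\cdot P)$, which is $G$-invariant and convex. By (b), $\sigma(G\cdot p)\subseteq\mathrm{conv}(W\cdot p)\subseteq P$ for each $p\in P$. Since $\sigma$ is linear, $\sigma(E)=\mathrm{conv}(\sigma(G\cdot P))\subseteq P$, and so $E\cap\Sigma=\sigma(E)=P$ by (a). Applying (a) once more gives $G\cdot P=E$, which is therefore convex.

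The main obstacle is the distance step in (b). One must make sure that the metric on $V/G$ is the orbit distance, and that the isometry $\Sigma/W\to V/G$ really is the map induced by inclusion. That identification is exactly what allows the minimum $\min_{g}|g\cdot x-u|$ to be attained by some $g\in N_G(\Sigma)$. I would take this from \cite{GOT,Ma1} rather than reprove it.
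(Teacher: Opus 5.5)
Your argument is correct, but it does not follow the paper. The paper gives no proof of Proposition \ref{Me}; it quotes it as a special case of Theorem C of \cite{Me}, whose general form (as recalled in the introduction) concerns two submetries with isometric base spaces and saturated closed convex sets. You instead derive the proposition directly from two facts the paper only mentions in passing: $(G\cdot x)\cap\Sigma=W\cdot x$, and the isometry $\Sigma/W\to V/G$ induced by inclusion. Your key observation is the polarization identity on an orbit, $\max_{y\in G\cdot x}\langle y,u\rangle=\tfrac12\bigl(|x|^2+|u|^2-d(G\cdot x,G\cdot u)^2\bigr)$. It shows that the support function of an orbitope, restricted to $\Sigma$, depends only on the orbit-space metric. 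Equality of support functions then gives $\sigma(\textup{conv}(G\cdot x))=\textup{conv}(W\cdot x)$, and (a) and (c) follow by the classical Kostant-style reduction to orbitopes. Your route gives a short, self-contained proof in exactly the stated generality. In particular, your step $\sigma(y)\in\textup{conv}(W\cdot x)\subseteq E$ needs no closedness, whereas the general submetry statement recalled in the introduction is phrased for closed sets. The citation, by contrast, buys brevity and places the result in the broader submetry framework. You are also right that the only substantial input is that $\min_{g\in G}|g\cdot x-u|$ is attained in $N_G(\Sigma)$. For $G$-regular $u\in\Sigma$ this is in fact immediate from (B) and Lemma 5.8 of \cite{GOT}: at a minimizer $g$, first variation gives $g\cdot x-u\in\nu_u(G\cdot u)\subseteq\Sigma$, so $g\cdot x\in(G\cdot x)\cap\Sigma=W\cdot x$. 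What you really import from \cite{GOT,Ma1} is therefore the extension to singular $u$.
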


\begin{proposition}\label{FQ}
Under the same assumptions and notations as in Proposition \ref{Me}, and further assume that $E$ is compact.

\noindent(a) Let $Q$ be a face of $P=\sigma(E)$, and $F_{Q}:=\sigma^{-1}(Q)\cap E$. Then $F_{Q}$ is a face of $E$, and $\sigma(F_{Q})=Q=F_{Q}\cap\Sigma$. Moreover, $F_{Q}$ is exposed if $Q$ is exposed.

\noindent(b) If there exist $0\neq u\in\Sigma$ and a face $F$ of $E$ such that $F:=F_{u}(E)$, then $Q:=\sigma(F)$ is an exposed face of $P$, and $F=F_{Q}:=\sigma^{-1}(Q)\cap E$.
\end{proposition}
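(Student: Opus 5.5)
Two facts drive the whole argument. First, $\sigma$ restricted to $E$ maps into $P$, because $\sigma(E)=P$ by Proposition \ref{Me}(a). Second, $\langle x,u\rangle=\langle \sigma(x),u\rangle$ for every $u\in\Sigma$, since $\sigma$ is the orthogonal projection. For (a) I would pull the face property of $Q$ back along the linear map $\sigma|_E$; for (b) I would push the exposing functional down to $\Sigma$.

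\emph{Part (a).} Convexity of $F_Q=\sigma^{-1}(Q)\cap E$ is clear: it is the intersection of the convex set $E$ with the preimage of the convex set $Q$ under a linear map.

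For the face property, take $x,y\in E$ and $z\in\textup{relint}[x,y]\cap F_Q$, say $z=(1-t)x+ty$ with $0<t<1$. By linearity, $\sigma(z)=(1-t)\sigma(x)+t\sigma(y)$ lies in $Q$, and $\sigma(x),\sigma(y)\in P$.

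If $\sigma(x)\neq\sigma(y)$, then $\sigma(z)\in\textup{relint}[\sigma(x),\sigma(y)]\cap Q$. Since $Q$ is a face of $P$, this gives $[\sigma(x),\sigma(y)]\subset Q$. If $\sigma(x)=\sigma(y)=\sigma(z)\in Q$, the same inclusion holds trivially. Either way $\sigma([x,y])\subset Q$, so $[x,y]\subset F_Q$.

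Next, $F_Q\cap\Sigma=Q$: every $q\in Q\subset P\subset E\cap\Sigma$ satisfies $\sigma(q)=q\in Q$, and conversely a point of $F_Q\cap\Sigma$ is fixed by $\sigma$ and so lies in $Q$. From this, $Q\subset\sigma(F_Q)\subset Q$, hence $\sigma(F_Q)=Q$.

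For exposedness, suppose $Q=F_u(P)$ with $0\neq u\in\Sigma$. Since $\langle x,u\rangle=\langle\sigma(x),u\rangle$ and $\sigma(E)=P$, we get $\max_E\langle\cdot,u\rangle=\max_P\langle\cdot,u\rangle$. Therefore $F_u(E)=\{x\in E:\sigma(x)\in F_u(P)\}=F_Q$, so $F_Q$ is exposed.

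\emph{Part (b).} Let $F=F_u(E)$ with $0\neq u\in\Sigma$. The identity $\max_E\langle\cdot,u\rangle=\max_P\langle\cdot,u\rangle$ gives
\[
F=\{x\in E:\sigma(x)\in F_u(P)\}=\sigma^{-1}(F_u(P))\cap E .
\]
Applying $\sigma$ and using $\sigma(E)=P$, we obtain $\sigma(F)=F_u(P)$. So $Q:=\sigma(F)$ is an exposed face of $P$, and $F=F_Q$.

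The only delicate point is in (a): the case where $\sigma$ collapses the segment $[x,y]$, so that the relative-interior hypothesis on $Q$ cannot be invoked. This is handled by the case split above. Compactness of $E$ is used only so that the maxima defining the exposed faces are attained.
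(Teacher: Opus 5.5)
Your proof is correct and follows the paper's argument. You pull the face property back along the linear map $\sigma$, obtain $F_Q\cap\Sigma=Q=\sigma(F_Q)$ from $P=E\cap\Sigma$, and in both (a) and (b) use $\langle x,u\rangle=\langle\sigma(x),u\rangle$ for $u\in\Sigma$ together with $\sigma(E)=P$ to identify $F_u(E)$ with $\sigma^{-1}(F_u(P))\cap E$. You are slightly more careful than the paper in checking convexity of $F_Q$ and the case $\sigma(x)=\sigma(y)$; that case is not actually delicate, since the relative interior of a point is the point itself, so the face property of $Q$ covers it directly.
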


\begin{proof} \noindent(a) For $x,y\in E$ such that $\textup{relint}[x,y]\cap F_{Q}\neq\emptyset$, $\sigma(\textup{relint}[x,y])\cap Q\neq\emptyset$, and hence $\sigma([x,y])\subset Q$ since $Q$ is a face. Thus $[x,y]\subset F_{Q}$, and hence $F_{Q}$ is a face of $E$. Since $\sigma(F_{Q})=Q\subset F_{Q}\cap \Sigma$ and $F_{Q}\cap \Sigma\subset \sigma(F_{Q})=Q$, we get $\sigma(F_{Q})=Q=F_{Q}\cap\Sigma$.
If there exists $0\neq u\in\Sigma$ such that $Q:=F_{u}(P)$ is an exposed face of $P=\sigma(E)$, then there exists $c\in \mathbb{R}$ such that $c=\langle x,u \rangle=\max_{y\in P}\langle y,u \rangle$ for any $x\in Q$. Since $\langle \sigma(z),u \rangle=\langle z,u \rangle$ for any $z\in E$, $c=\max_{y\in \sigma(E)}\langle y,u \rangle=\max_{y\in E}\langle y,u \rangle=\langle \bar{x},u \rangle$ for any $\bar{x}\in F_{Q}$. Thus $F_{Q}=F_{u}(E)$ is an exposed face of $E$.

\noindent(b) By the same argument as in (a), we can prove $Q=F_{u}(P)$ is an exposed face of $P$. Thus $F_{Q}=F_{u}(E)=F$.
\end{proof}

Note that in Proposition \ref{Me} and \ref{FQ}, $\Sigma$ can be any fat section, even $V$. In fact, if $\Sigma=V$, then $W=G$, hence in this trivial case these Propositions still hold. Therefore, we do not emphasize that $(V, G)$ has nontrivial copolarity in these two Propositions.

\section{Proof of Theorems}
\renewcommand{\thesection}{\arabic{section}}
\renewcommand{\theequation}{\thesection.\arabic{equation}}
\setcounter{equation}{0}
\setcounter{theorem}{0}

\begin{proposition}\label{F1} Under the same assumptions and notations as in Proposition \ref{Me}, and further assume that $E$ is compact. Let $0\neq u_{1}\in \Sigma$, $F_{1}:=F_{u_{1}}(E)$ an exposed face of $E$ and $Q_{1}:=\sigma(F_{1})$. Let $V_{1}:=u_{1}+ \nu_{u_{1}}(G\cdot u_{1})$, $\Sigma_{1}:=V_{1}\cap\Sigma$ and $\sigma_{1}: V_{1}\rightarrow\Sigma_{1}$ the orthogonal projection onto $\Sigma_{1}$. Set $G_{1}:=G_{u_{1}}=\{g\in G\mid g\cdot u_{1}=u_{1}\}$. Then:

\noindent(a) $F_{1}$ is a $G_{1}$-invariant compact convex set in $V_{1}$, and $\Sigma_{1}$ is a fat section of the orthogonal representation $(V_{1}, G_{1})$.

\noindent(b) $\sigma_{1}=\sigma\mid_{V_{1}}$, i.e., $\sigma_{1}(x)=\sigma(x)$ for any $x\in V_{1}$.

\noindent(c) $Q_{1}=\sigma_{1}(F_{1})=F_{1}\cap\Sigma_{1}=F_{u_{1}}(P)$ is an exposed face of $P$ and $F_{1}=G_{1}\cdot Q_{1}=F_{Q_{1}}:=\sigma^{-1}(Q_{1})\cap E$.

\end{proposition}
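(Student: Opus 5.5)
We will prove the three parts in the order (a), (b), (c). The idea is to reduce everything to the slice representation at $u_{1}$ and then apply Proposition \ref{Me} to the pair $(V_{1},G_{1})$ with fat section $\Sigma_{1}$.

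\emph{Part (a).} First we show $F_{1}\subset V_{1}$. For $x\in F_{1}$ and any $\xi$ in the Lie algebra of $G$, the function $t\mapsto\langle \exp(t\xi)x,u_{1}\rangle$ attains its maximum at $t=0$, because $E$ is $G$-invariant. Differentiating and using skew-symmetry gives $\langle x,\xi u_{1}\rangle=0$. Since also $\langle u_{1},\xi u_{1}\rangle=0$, we get $x-u_{1}\perp T_{u_{1}}(G\cdot u_{1})$, so $x\in V_{1}$.

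Next, $F_{1}$ is $G_{1}$-invariant: if $g\cdot u_{1}=u_{1}$, then $\langle gx,u_{1}\rangle=\langle x,g^{-1}u_{1}\rangle=\langle x,u_{1}\rangle$. Compactness and convexity are standard for exposed faces.

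For the fat section claim, we use the slice representation. We regard $(V_{1},G_{1})$ as $(\nu_{u_{1}}(G\cdot u_{1}),G_{u_{1}})$ translated by $u_{1}$; since $u_{1}$ is fixed by $G_{1}$ and orthogonal to the tangent space, this is harmless. We then invoke the known fact that slice representations inherit fat sections: $\Sigma\cap\nu_{u_{1}}(G\cdot u_{1})$ is a fat section of the slice representation at $u_{1}\in\Sigma$ (\cite{GOT}, \cite{Me}). To verify (A) directly, we use that $\Sigma$ meets every $G$-orbit and $u_{1}\in\Sigma$. A point $u_{1}+v$ with $v$ small can be moved into $\Sigma$ by some $g$. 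A standard argument then lets us take $g\in G_{1}$, and homogeneity under scaling removes the smallness assumption. Conditions (B) and (C) follow from $G$-regular points near $u_{1}$ in $V_{1}$ being $G_{1}$-regular, with the same normal spaces. This step is the main obstacle; if needed, I would cite the slice-representation result of \cite{GOT} rather than reprove it.

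\emph{Part (b).} Since $u_{1}\in\Sigma$, we have $V_{1}\cap\Sigma=u_{1}+(\nu_{u_{1}}\cap\Sigma)$. It suffices to show that $\sigma$ maps $\nu_{u_{1}}(G\cdot u_{1})$ into itself, i.e. that $\sigma$ commutes with the projection onto the tangent space $T=\mathfrak g\cdot u_{1}$. Take $w\in\nu_{u_{1}}$ and $\xi$ in the Lie algebra. Then $\langle\sigma w,\xi u_{1}\rangle=\langle w,\sigma(\xi u_{1})\rangle$. For regular $p\in\Sigma$ we have $\nu_{p}(G\cdot p)\subseteq\Sigma$, hence $\Sigma^{\perp}\subseteq T_{p}(G\cdot p)$. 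By continuity/density one gets that $\Sigma$ splits compatibly with $T$ at $u_{1}$, i.e. $\xi u_{1}=\sigma(\xi u_{1})+(\text{element of }\Sigma^{\perp}\cap T)$, so $\sigma(\xi u_{1})\in T$. I would prove this via the decomposition $T=(T\cap\Sigma)\oplus(T\cap\Sigma^{\perp})$, which follows from (B)–(C) applied at regular points of the fat section at $u_{1}$. It then follows that $\sigma(V_{1})\subset\Sigma_{1}$, and since $\sigma$ restricted to $V_{1}$ is an orthogonal projection onto $\Sigma_{1}$, we get $\sigma|_{V_{1}}=\sigma_{1}$.

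\emph{Part (c).} By Proposition \ref{FQ}(b), $Q_{1}=\sigma(F_{1})=F_{u_{1}}(P)$ is exposed and $F_{1}=F_{Q_{1}}$. By (b), $\sigma_{1}(F_{1})=\sigma(F_{1})=Q_{1}$. Applying Proposition \ref{Me}(a) to $(V_{1},G_{1})$, $\Sigma_{1}$ and the $G_{1}$-invariant convex set $F_{1}$ (using (a)), we obtain $\sigma_{1}(F_{1})=F_{1}\cap\Sigma_{1}$ and $F_{1}=G_{1}\cdot Q_{1}$.
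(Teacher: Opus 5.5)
Your proof of $F_1\subset V_1$ is correct and more direct than the paper's. Differentiating $t\mapsto\langle\exp(t\xi)x,u_1\rangle$ and using skew-symmetry gives $x\perp\mathfrak g\cdot u_1$ at once. The paper instead first shows $u_1-p\perp T_p(G\cdot p)$ and then transports this to $T_{u_1}(G\cdot u_1)$, using that a segment normal to one orbit is normal to every orbit it meets. Your part (c) is the paper's argument.

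For the fat-section claim in (a) you end up citing \cite{GOT}, as the paper does, and that citation is genuinely needed. Your side remark that $G$-regular points of $\Sigma_1$ near $u_1$ have ``the same normal spaces'' for $G$ in $V$ and for $G_1$ in $V_1$ is false in general. Take $SO(3)$ acting diagonally on $\mathbb R^3\oplus\mathbb R^3$, with $\Sigma=\mathbb R^2\oplus\mathbb R^2$ and $u_1=(e_1,0)$. There the spaces $\nu_p(G\cdot p)$ tilt out of $V_1$, and the codimension $k$ drops from $1$ to $0$.

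The genuine gap is in (b). Your reduction is right: $\sigma|_{V_1}=\sigma_1$ iff $\sigma$ preserves $V_1=\nu_{u_1}(G\cdot u_1)$ iff $\sigma(\mathfrak g\cdot u_1)\subseteq\mathfrak g\cdot u_1$. But this splitting is the whole content of (b). It is Lemma 5.10 of \cite{GOT} (Proposition 3.5 of \cite{Ma1}), which the paper simply invokes, and your justification does not prove it.

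From $\Sigma^\perp\subseteq T_p(G\cdot p)$ at regular $p\in\Sigma$, passing to a limit $p_n\to u_1$ only shows that a limit $L$ of the spaces $T_{p_n}(G\cdot p_n)$ is $\sigma$-invariant. At a singular $u_1$ the orbit dimension drops, so $\mathfrak g\cdot u_1\subsetneq L$. A subspace of a $\sigma$-invariant subspace need not be $\sigma$-invariant, so continuity and density give nothing here. Appealing to (B)--(C) for the fat section $\Sigma_1$ of the slice representation does not help either. That only controls normal spaces inside $V_1$, not how $\Sigma^\perp$ sits relative to $V_1$.

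You should either cite the lemma or prove it. One way is through the isometry $V/G\cong\Sigma/W$. For $s\in\Sigma$ one has $d(u_1+ts,G\cdot u_1)=d(u_1+ts,W\cdot u_1)$. Comparing the terms of order $t$ gives $|\pi_{\mathfrak g\cdot u_1}s|=|\pi_{\mathfrak n\cdot u_1}s|$ for all $s\in\Sigma$. Here $\pi_X$ is orthogonal projection onto $X$, $\mathfrak n$ is the Lie algebra of $N_G(\Sigma)$, and $\mathfrak n\cdot u_1\subseteq(\mathfrak g\cdot u_1)\cap\Sigma$ is the tangent space of $W\cdot u_1$. Hence $(\mathfrak g\cdot u_1)\ominus(\mathfrak n\cdot u_1)\perp\Sigma$. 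Equivalently, $\mathfrak g\cdot u_1=(\mathfrak n\cdot u_1)\oplus((\mathfrak g\cdot u_1)\cap\Sigma^\perp)$, which is exactly the splitting you need.
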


\begin{proof} \noindent(a) Let $\varphi: V\rightarrow \mathbb{R}$ be the function $\varphi(x)=\langle x,u_{1} \rangle$. Since $F_{1}=F_{u_{1}}(E)$ and $E$ is $G$-invariant, $\langle p,u_{1} \rangle=\max_{y\in E}\langle y,u_{1} \rangle=\max_{y\in G\cdot p}\langle y,u_{1} \rangle$ for any $p\in F_{1}$. Thus $0=X(\varphi(x))=\langle X,u_{1} \rangle$ for any $X\in T_{p}(G\cdot p)$, and hence $u_{1}\perp T_{p}(G\cdot p)$. Since the orbit $G\cdot p$ lies entirely in the round sphere in $V$ that is centered at $0$ and contains $p$, $p\perp T_{p}(G\cdot p)$, and hence $(u_{1}-p)\perp T_{p}(G\cdot p)$. Let $\gamma$ be the line segment from $u_{1}$ to $p$, then the geodesic $\gamma$ orthogonal to $G\cdot p$. Since a geodesic orthogonal to an orbit must be orthogonal to every orbit it meets, $\gamma$ orthogonal to $G\cdot u_{1}$, and hence $(u_{1}-p)\perp T_{u_{1}}(G\cdot u_{1})$. Thus $p\in V_{1}=u_{1}+ \nu_{u_{1}}(G\cdot u_{1})$, and hence $F_{1}\subset V_{1}$. Since $u_{1}\perp T_{u_{1}}(G\cdot u_{1})$, $0\in V_{1}$, so $V_{1}$ is a linear subspace of $V$. Clearly, $F_{1}=F_{u_{1}}(E)$ is a $G_{1}$-invariant compact convex set in $V_{1}$. By P.1598 in \cite{GOT} $\Sigma_{1}$ is a fat section of the orthogonal representation $(V_{1}, G_{1})$.

\noindent(b) By Lemma 5.10 in \cite{GOT} (see also Proposition 3.5 in \cite{Ma1}), $V_{1}=(V_{1}\cap\Sigma)\oplus (V_{1}\cap\Sigma^{\perp})$, where $\Sigma\oplus\Sigma^{\perp}=V$. Thus $\sigma_{1}=\sigma\mid_{V_{1}}$.

\noindent(c) By (a), (b) and Proposition \ref{Me}, $Q_{1}=\sigma_{1}(F_{1})=F_{1}\cap\Sigma_{1}$ and $F_{1}=G_{1}\cdot Q_{1}$. By the proof of Proposition \ref{FQ}, $Q_{1}=\sigma(F_{u_{1}}(E))=F_{u_{1}}(P)$, and hence $F_{Q_{1}}=F_{u_{1}}(E)=F_{1}$.
\end{proof}

We are now ready to prove Theorem \ref{dl}. The following arguments of proving Theorem \ref{dl} is inspired by the approach of Theorem 5.2 in \cite{KS}. Here we use the same assumptions and notions as in Proposition \ref{FQ}. For any $w\in W$ there exists $g\in N_{G}(\Sigma)$ with $w = gZ_{G}(\Sigma)$. It is easy to see that the projection $\sigma$ commute with the action of $N_{G}(\Sigma)$, and hence $F_{w\cdot Q} = g\cdot F_{Q}$. Thus the assignment $Q\rightarrow F_{Q}$ induces a map from $W$-orbits of faces of $P$ to $G$-orbits of faces of $E$. The following theorem asserts that this map is bijective, thus proving Theorem \ref{dl}:

\begin{theorem}\label{KS} Under the same assumptions and notations as in Proposition \ref{FQ}. Let $\tilde{F}$ be a face of the $E$.

\noindent(a) There exist a face $Q$ of $P$ and an element $g\in G$ such that $\tilde{F} = g\cdot F_{Q}$.

\noindent(b) If $Q'$ is another face of $P$ with $\tilde{F}\subseteq g'\cdot F_{Q'}$ for some $g'\in G$, then there exists $w\in W$ such that $w\cdot Q \subseteq Q'$.

\noindent In particular, $Q\rightarrow F_{Q}$ induces a bijective correspondence between $W$-orbits of faces of $P$ and $G$-orbits of faces of $E$, compatible with inclusion of faces.
\end{theorem}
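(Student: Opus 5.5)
The approach is induction on dimension, using maximal chains of faces from Lemma \ref{chain} together with the reduction to the slice representation in Proposition \ref{F1}, in the spirit of Theorem 5.2 in \cite{KS}.

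\textbf{Part (a).} I argue by induction on $\dim V$; the case of $\tilde F=E$ is trivial with $Q=P$. Let $\tilde F$ be a proper face.

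First, by Lemma \ref{chain}, I choose a maximal chain $\tilde F=F_n\subsetneq\cdots\subsetneq F_1\subsetneq F_0=E$. Then $F_1$ is a maximal proper face of $E$. A maximal proper face of a compact convex set is exposed, because it lies in some proper exposed face, and maximality forces equality. So $F_1=F_v(E)$ for some $v\neq0$.

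Next, I move $v$ into $\Sigma$. Since $\Sigma$ meets every orbit, there is $h\in G$ with $u_1:=h\cdot v\in\Sigma$. By $G$-invariance of $E$, $h\cdot F_1=F_{u_1}(E)$. Replacing $\tilde F$ by $h\cdot\tilde F$, I may assume $F_1=F_{u_1}(E)$ with $0\neq u_1\in\Sigma$.

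Proposition \ref{F1} now applies. $F_1$ is a $G_1$-invariant compact convex set in $V_1$, and $\Sigma_1$ is a fat section of $(V_1,G_1)$. Moreover $Q_1=F_1\cap\Sigma_1=F_{u_1}(P)$ is an exposed face of $P$, and $F_1=F_{Q_1}$.

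Since $\tilde F$ is a face of $F_1$, I want to apply induction inside $(V_1,G_1)$. A dimension drop is needed, and it may fail: $V_1=V$ exactly when $G\cdot u_1=\{u_1\}$. To handle this, I run the induction on $\dim E$ rather than $\dim V$. Since $F_1\subsetneq E$ is a face, $\dim F_1<\dim E$. The ambient space is irrelevant because Propositions \ref{Me} and \ref{FQ} hold for any fat section, including the trivial one.

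By induction there are a face $Q$ of $Q_1$ and $g_1\in G_1$ with $\tilde F=g_1\cdot(\sigma_1^{-1}(Q)\cap F_1)$. Here $Q$ is a face of $Q_1$ in $\Sigma_1$, hence a face of $P$. By Proposition \ref{F1}(b), $\sigma_1^{-1}(Q)\cap F_1=\sigma^{-1}(Q)\cap F_1=\sigma^{-1}(Q)\cap E=F_Q$, using $Q\subset Q_1$ and $F_1=F_{Q_1}$. Undoing $h$ gives $\tilde F=h^{-1}g_1\cdot F_Q$.

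\textbf{Part (b).} Suppose $g\cdot F_Q\subseteq g'\cdot F_{Q'}$. Take $x\in\operatorname{relint}Q$. Then $g'^{-1}g\,x\in F_{Q'}$, so $\sigma(g'^{-1}g\,x)\in Q'$, and this point lies in $E\cap\Sigma$.

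The main obstacle is to pass from the element $g'^{-1}g$ to an element of $W$. I would first try the following argument.

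\begin{itemize}
\item Since $F_{Q'}=G_{Q'}$-saturated is not known, I instead use that $F_{Q'}$ is a face containing the point $y=g'^{-1}g\,x$. Hence $F_{Q'}$ contains the whole face of $E$ whose relative interior contains $y$.
\item By part (a) this face is $k\cdot F_R$, and $F_{Q'}\cap\Sigma=Q'$.
\item I would first compare $y$ with $\sigma(y)$ and try to show that $y$ itself can be moved into $\Sigma$ within the orbit $G\cdot x\cap\Sigma=W\cdot x$ (Lemma 5.8 of \cite{GOT}). That is, I seek $w$ with $w\cdot x\in Q'$.
\item To do this, I would prove the stronger inductive statement that $F_{Q'}\cap G\cdot x\subseteq G_{\ast}\cdot(Q'\cap W\cdot x)$. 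This would follow by descending a maximal chain from $P$ to $Q'$ and using $F_1=G_1\cdot Q_1$ from Proposition \ref{F1}(c) at each exposed step.
\end{itemize}

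Granting this, $y\in F_{Q'}\cap G\cdot x$ gives some $w\in W$ with $w\cdot x\in Q'$, with $x\in\operatorname{relint}Q$. Then $w\cdot Q$ is a face of $P$ whose relative interior meets the face $Q'$, so $w\cdot Q\subseteq Q'$.

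For the final bijection: applying (b) in both directions yields $w\cdot Q\subseteq Q'$ and $w'\cdot Q'\subseteq Q$. Comparing dimensions and using that a face contained in a face of equal dimension coincides with it gives $w\cdot Q=Q'$. This proves injectivity on orbits, and part (a) gives surjectivity.
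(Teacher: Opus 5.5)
Your proposal is correct and follows the paper's proof. In (a) you descend through exposed faces, rotate the exposing vector into $\Sigma$, and pass to $(V_1,G_1,\Sigma_1)$ via Proposition~\ref{F1}, only repackaging the paper's chain-and-maximality bookkeeping as an induction on $\dim E$ with the direct identity $\sigma_1^{-1}(Q)\cap F_1=F_Q$. In (b), the claim you ``grant'' is exactly the paper's lemma $F_{Q'}=K'\cdot Q'$, proved as you sketch by applying Proposition~\ref{F1}(c) along a maximal chain from $P$ down to $Q'$ (your exploratory first bullets are unnecessary), and then $G\cdot x\cap\Sigma=W\cdot x$ finishes the argument.
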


\begin{proof} \noindent(a) By Lemma \ref{chain} there is a chain of faces $\tilde{F}=\tilde{F}_{n}\subsetneq \cdots \subsetneq \tilde{F}_{1}\subsetneq \tilde{F}_{0}=E$ which is maximal. Since any face is contained in an exposed face and $\tilde{F}_{i}$ is a maximal proper face of $\tilde{F}_{i-1}$, $\tilde{F}_{i}$ is an exposed face of $\tilde{F}_{i-1}$. Thus $\tilde{F}_{1}=F_{\tilde{u}_{1}}(E)$ for some $\tilde{u}_{1}\in V$. Since $\Sigma$ is a fat section of $(V,G)$, there exist $g_{1}\in G$ and a face $F_{1}$ of $E$ such that $u_{1}:=g_{1}\cdot\tilde{u}_{1}\in \Sigma$ and $F_{1}:=g_{1}\cdot\tilde{F}_{1}=F_{u_{1}}(E)$. With the same notations as in Proposition \ref{F1}, we get $F_{1}$ is a $G_{1}$-invariant compact convex set in $V_{1}$, and $\Sigma_{1}$ is a fat section of the orthogonal representation $(V_{1}, G_{1})$. Clearly, the chain of faces $g_{1}\cdot\tilde{F}\subsetneq \cdots \subsetneq g_{1}\cdot\tilde{F}_{2}\subsetneq g_{1}\cdot\tilde{F}_{1}=F_{1}\subsetneq g_{1}\cdot\tilde{F}_{0}=E$ is also maximal, and thus $g_{1}\cdot\tilde{F}_{2}$ is an exposed face of $F_{1}$. By the same argument as above, there exist $g_{2}\in G_{1}$, $u_{2}\in \Sigma_{1}$ and an exposed face $F_{2}:=(g_{2}g_{1})\cdot\tilde{F}_{1}=F_{u_{2}}(F_{1})$ of $g_{2}\cdot F_{1}=F_{1}$. Therefore by induction there exist a maximal chain of faces $F=F_{n}\subsetneq \cdots \subsetneq F_{1}\subsetneq F_{0}=E$, a chain of orthogonal representations with fat sections $(V_{n},G_{n},\Sigma_{n})\subset \cdots \subset(V_{1},G_{1},\Sigma_{1})\subset(V_{0},G_{0},\Sigma_{0})=(V,G,\Sigma)$, $g_{i}\in G_{i-1}$ and $u_{i}\in \Sigma_{i-1}$, such that $G_{i}:=(G_{i-1})_{u_{i}}\subset G_{i-1}$, $V_{i}:=u_{i}+\nu_{u_{i}}(G_{i-1}\cdot u_{i})\subset V_{i-1}$, $\Sigma_{i}=V_{i}\cap \Sigma_{i-1}$ is a fat section of $(V_{i},G_{i})$, and $F_{i}=(g_{i}g_{i-1}\cdots g_{1})\tilde{F}_{i}=F_{u_{i}}(F_{i-1})$ is a $G_{i}$-invariant compact convex set in $V_{i}$, for $1\leq i\leq n$.

Let $\sigma_{i}:V_{i}\rightarrow \Sigma_{i}$ be the orthogonal projection onto $\Sigma_{i}$, $Q_{i}:=\sigma_{i}(F_{i})$ and $Q:=Q_{n}$. By Proposition \ref{F1} $\sigma_{i}=\sigma_{i-1}\mid_{V_{i}}=\sigma_{i-2}\mid_{V_{i}}=\cdots =\sigma\mid_{V_{i}}$, $F_{i}=G_{i}\cdot Q_{i}$ and $Q_{i}=F_{u_{i}}(Q_{i-1})$. Since $F_{i}=F_{u_{i}}(F_{i-1})\subsetneq F_{i-1}$, $Q_{i}=F_{u_{i}}(Q_{i-1})\subsetneq Q_{i-1}$. Thus $Q=Q_{n}\subsetneq Q_{n-1}\subsetneq\cdots \subsetneq Q_{1}\subsetneq P$ is a chain of proper exposed faces. We will prove $F_{i}=F_{Q_{i}}:=\sigma^{-1}(Q_{i})\cap E$ by induction on $i$. For $i=1$, $F_{1}=F_{Q_{1}}$ by Proposition \ref{F1}. Let $i\geq1$ and assume that $F_{k}=F_{Q_{k}}$ for all $1\leq k\leq i$. Since $Q_{i+1}$ is a face of $P$, by Proposition \ref{FQ} $F_{Q_{i+1}}$ is a face of $E$. Since $F_{Q_{i+1}}\subsetneq F_{Q_{i}}=F_{i}$, $F_{Q_{i+1}}$ is a proper face of $F_{i}$. Since $Q_{i+1}=\sigma_{i+1}(F_{i+1})=\sigma(F_{i+1})$, $F_{i+1}\subseteq F_{Q_{i+1}}$. But $F_{i+1}$ is a maximal proper face of $F_{i}$, we have $F_{i+1}=F_{Q_{i+1}}$. Hence $F=F_{n}=F_{Q_{n}}=F_{Q}$. Set $g:=(g_{n}g_{n-1}\cdots g_{1})^{-1}$. Then $\tilde{F}=g\cdot F=g\cdot F_{Q}$.

\noindent(b) First, we prove that for any face $Q$ of $P$ there exists a subgroup $K\subset G$ such that $F_{Q}=K\cdot Q$. By Lemma \ref{chain} there is a chain of faces $Q=Q_{n}\subsetneq \cdots \subsetneq Q_{1}\subsetneq Q_{0}=P$ which is maximal. Since any face is contained in an exposed face, $Q_{i}$ is an exposed face of $Q_{i-1}$. Thus $Q_{1}=F_{u_{1}}(P)$ for some $u_{1}\in \Sigma$. Let $F_{1}:=F_{Q_{1}}$ and use the same notations as in Proposition \ref{F1}. Then $F_{1}=F_{u_{1}}(E)=G_{1}\cdot Q_{1}$ is a $G_{1}$-invariant convex set in $V_{1}$, and $\Sigma_{1}$ is a fat section of the orthogonal representation $(V_{1}, G_{1})$. Since $Q_{2}$ is an exposed face of $Q_{1}=\sigma_{1}(F_{1})=F_{1}\cap\Sigma_{1}$, $Q_{2}=F_{u_{2}}(Q_{1})$ for some $u_{2}\in \Sigma_{1}$. Let $F_{2}:=F_{Q_{2}}$, then $F_{2}=F_{u_{2}}(F_{1})=G_{2}\cdot Q_{2}$ with $G_{2}=(G_{1})_{u_{2}}$. Now let $F_{i}:=F_{Q_{i}}$ for $1\leq i\leq n$, $K:=G_{n}$ and use the same notations as in the proof of (a). Then by induction we have $F_{Q}=F_{n}=G_{n}\cdot Q_{n}=K\cdot Q$.

To prove (b), it follows from (a) that it suffices to show: If $Q, Q'$ are faces of $P$, and if $g\cdot F_{Q}\subseteq F_{Q'}$ for some $g\in G$, then there exists $w\in W$ with $w\cdot Q \subseteq Q'$. If $y\in \textup{relint}Q$, then $g\cdot y\in g\cdot F_{Q}\subseteq F_{Q'}$ since $Q\subset F_{Q}$. By above there is a subgroup $K'\subset G$ such that $F_{Q'}=K'\cdot Q'$. So there exists $p'\in Q'$ such that $g\cdot y\in K'\cdot p'$. Thus $p'\in Q'\cap K'\cdot (g\cdot y)\subset \Sigma\cap(G\cdot y)=W\cdot y$. Hence there exists $w\in W$ with $w\cdot y=p' \in Q'$. Since $w\cdot Q$ and $Q'$ are faces of $P$ and $w\cdot y\in Q'\cap\textup{relint}(w\cdot Q)$, we have $w\cdot Q \subseteq Q'$.
\end{proof}

\noindent {\it Proof of Theorem \ref{d2}.} Assume that all faces of $P$ are exposed. By Theorem \ref{dl} any face $F\in \mathscr{F}(E)$ can be written as $F=g\cdot F_{Q}$ for some $g\in G$ and $Q\in \mathscr{F}(P)$. By Proposition \ref{FQ} $F_{Q}$ is exposed since $Q$ is exposed, and hence $F=g\cdot F_{Q}$ is also exposed. Conversely, suppose that all faces of $E$ are exposed. For any face $Q\in \mathscr{F}(P)$, $F_{Q}\in \mathscr{F}(E)$ is exposed. Thus $F_{Q}:=F_{\tilde{u}}(E)$ for some $\tilde{u}\in V$. Since $\Sigma$ is a fat section of $(V,G)$, there exists $g\in G$ such that $u:=g\cdot \tilde{u}\in \Sigma$. By Theorem \ref{dl} there exists $w\in W$ such that $F_{w\cdot Q}:=g\cdot F_{Q}=F_{u}(E)$.  By Proposition \ref{FQ} $w\cdot Q=F_{u}(P)$ is an exposed face, and hence $Q$ is also exposed.
$\hfill \square$\\

\noindent$\mathbf{Funding.}$ The author is supported by National Natural Science Foundation of China (No.12261013).

 \end{document}